\documentclass{article}
\usepackage[margin=0.75in]{geometry}
\usepackage{hyperref}
\usepackage{amsmath, amscd, amsfonts, amsthm, amssymb, enumerate, graphicx}
\usepackage[shortlabels]{enumitem}
\usepackage{url}
\usepackage[all]{xy}

\newtheorem{thm}{Theorem}[section]
\newtheorem{lem}[thm]{Lemma}
\newtheorem{prop}[thm]{Proposition}

\theoremstyle{definition}

\newtheorem{defn}[thm]{Definition}
\newtheorem{examp}[thm]{Example}
\theoremstyle{remark}

\newtheorem{case}{Case}

\DeclareMathOperator{\ev}{ev}

\DeclareMathOperator{\cha}{char}

\DeclareMathOperator{\mult}{mult}

\DeclareMathOperator{\Frac}{Frac}

\DeclareMathOperator{\Cl}{Cl}
\DeclareMathOperator{\Gal}{Gal}
\DeclareMathOperator{\Disc}{Disc}

\DeclareMathOperator{\Pf}{Pfaff}
\newcommand{\red}{\mathrm{red}}
\newcommand{\<}{\left\langle}
\renewcommand{\>}{\right\rangle}
\renewcommand{\(}{\left(}
\renewcommand{\)}{\right)}
\newcommand{\GL}{\mathrm{GL}}
\newcommand{\SL}{\mathrm{SL}}

\newcommand{\FF}{\mathbb{F}}

\newcommand{\ZZ}{\mathbb{Z}}
\renewcommand{\aa}{\mathfrak{a}}

\newcommand{\cc}{\mathfrak{c}}

\newcommand{\pp}{\mathfrak{p}}

\newcommand{\OO}{\mathcal{O}}

\newcommand{\cross}{\times}
\newcommand{\tensor}{\otimes}

\newcommand{\textand}{\quad \text{and} \quad}

\renewcommand{\to}{\mathop{\rightarrow}\limits}
\newcommand{\bx}{\square}
\newcommand{\etop}{e_{\mathrm{top}}}
\newcommand{\ftop}{f_{\mathrm{top}}}
\newcommand{\intsec}{\cap}
\newcommand{\union}{\cup}
\newcommand{\isom}{\cong}
\newcommand{\ignore}[1]{}

\newcommand{\bbq}[8]{
\begin{minipage}{0.1\linewidth}
\xymatrix@!0{
& #5 \ar@{-}[rr]\ar@{-}[dd]
& & #6 \ar@{-}[dd]
\\
#1 \ar@{-}[ur]\ar@{-}[rr]\ar@{-}[dd]
& & #2 \ar@{-}[ur]\ar@{-}[dd]
\\
& #7 \ar@{-}[rr]
& & #8
\\
#3 \ar@{-}[rr]\ar@{-}[ur]
& & #4 \ar@{-}[ur]
}
\end{minipage}
}

\newdir^{ (}{{}*!/-5pt/@^{(}}
\newdir_{ (}{{}*!/-5pt/@_{(}}

\begin{document}

\title{Quintic rings over Dedekind domains\\and their sextic resolvents}
\author{Evan M. O'Dorney%
\let\thefootnote\relax\footnote{\emph{University of Notre Dame, Notre Dame, Indiana, USA.} \url{eodorney@nd.edu}}}
\maketitle

\begin{abstract}
Bhargava parametrized quintic rings over $\ZZ$ by quadruples of $5\times 5$ alternating matrices. We extend the construction to work similarly over any Dedekind domain $R$. No assumptions are needed on the characteristic of $R$. The resolvent consists of a pair of locally free modules $L$, $M$ with two multilinear maps between them; we can view $L$ as $Q/R$, for $Q$ the quintic ring, and $M$ as $S/R$, where $S$ is a sextic resolvent ring. As in Bhargava's treatment, any quintic ring has a resolvent ring, and for a maximal ring, the resolvent is unique. We hope that this work will enable the removal of the condition that the characteristic be different from $2$ in Bhargava-Shankar-Wang's proof of Linnik's conjecture on the asymptotic distribution of discriminants of relative extensions.
\end{abstract}

\begin{itemize}
  \item Keywords: quintic ring, Dedekind domain, resolvent, higher composition laws
  \item MSC: 13F05, 13B02 (Primary), 11E76, 11R21 (Secondary)
  \item Data availability statement: This project has no associated data.
\end{itemize}

\section{Introduction}

Since their publication in the 2000's \cite{B1,B2,B3,B4}, there has been continual interest in Bhargava's \emph{higher composition laws,} that is, parametrizations of number rings and related objects by $n$-ary $d$-ic forms and similarly explicit objects. Early examples of this, like the Levi-Delone-Gross 
parametrization of cubic rings by their index forms \cite{Levi, DF, GGS}, now fit into a larger paradigm that include the 14 higher composition laws appearing in Bhargava's initial series as well as numerous additional examples, such as Wood's parametrization of $2$-torsion in rings parametrized by an odd-degree binary form by $2\times n \times n$ matrices \cite{W2xnxn}. These higher composition laws have been found very useful, especially in applying the geometry-of-numbers method in arithmetic statistics \cite{Bd4c,Bd5c}. 

Although Bhargava's paper series worked out only the case where the base ring is $\ZZ$, it has immediately been clear that his methods work over more general bases. Wood worked out two parametrizations over a general \emph{base scheme:} the case of Gauss composition of binary quadratic forms, which gave higher composition laws their name \cite{WGauss}, and the parametrization of quartic rings, or shall we say fourfold covers \cite{WQuartic}.

A more modest degree of generalization, in which still a great degree of concreteness can be achieved, is that when the base $R$ is a Dedekind domain. This includes the case $R = \OO_K$ of the ring of integers of a number field, allowing one to study relative extensions, which have long been of interest in arithmetic statistics \cite{EVnumber,Dummit_Counting}. The Dedekind case also includes $R$ that are fields, DVR's, or coordinate rings of affine curves, which arise in assorted contexts. Special note must be given to the work of Bhargava, Shankar, and Wang \cite{BSW}, which gives the asymptotic number of relative extensions of degree up to $5$ over a global field not of characteristic $2$, proving what is commonly called Linnik's conjecture in those cases. The present work suggests that the last hypothesis can be removed.

In \cite{ORings}, the author adapted Bhargava's work to parametrize quadratic, cubic, and quartic rings over a Dedekind domain. Among the remaining cases, the parametrization of quintic rings stands out as the most involved of Bhargava's higher composition laws, and one of great interest:

\begin{thm}[Bhargava \cite{B4}, Theorems 1 and 2 and Corollary 3] There is a suitable notion of resolvent for quintic rings over $\ZZ$ with the following properties:
\begin{enumerate}[$($a$)$]
  \item There is a canonical bijection between the orbits of $\Gamma = \GL_4(\ZZ) \cross \SL_5(\ZZ)$ on the space $\ZZ^4 \tensor \Lambda^2 \ZZ^5$ of quadruples of $5 \times 5$ alternating matrices and the set of isomorphism classes of pairs $(Q,S)$, where $Q$ is a quintic ring and $S$ is a sextic resolvent ring of $Q$.
  \item Every quintic ring $Q$ has a resolvent, that is, appears in this bijection for some element $A \in \ZZ^4 \tensor \Lambda^2 \ZZ^5$.
  \item If $Q$ is a maximal ring, then the resolvent is unique up to isomorphism, and the element $A$ is unique up to $\Gamma$-equivalence.
\end{enumerate}
\end{thm}

In this paper, we prove a generalization of these results to the case where the base ring $\ZZ$ is replaced by a Dedekind domain. Certain modifications in the method of Bhargava \cite{B4} must be made to address the ideal class group of $R$ and the fact that $R$ may have characteristic two. Recall that finitely generated, locally free modules $M$ over a Dedekind domain are characterized by two invariants: the \emph{dimension} $n = \dim_K (M \tensor K)$ (where $K = \Frac R$) and the \emph{Steinitz class,} that is, the ideal class $\aa$ for which $\Lambda^n M \isom \aa$ as $R$-modules.
The main theorem is notably unchanged in spirit:

\begin{thm}\label{thm:main}
There is a suitable notion of resolvent for quintic rings over a Dedekind domain $R$ with the following properties:
\begin{enumerate}[$($a$)$]
  \item\label{main:param} Let $\aa \in \Cl(R)$. Let $L_\aa = R^{\oplus 3} \oplus \aa$ be the lattice over $R$ of dimension $4$ and Steinitz class $\aa$. Let $M_\aa = R^{\oplus 4} \oplus \aa^{3}$ be the lattice over $R$ of dimension $5$ and Steinitz class $\aa^{3}$. There is a canonical bijection between:
  \begin{itemize}
    \item the orbits of $\Gamma_\aa = \GL(L_\aa) \cross \GL(M_\aa)$ on the space $\aa \tensor L_\aa \tensor (\Lambda^2 M_\aa)^*$, whose elements can be viewed as quadruples of $5 \times 5$ alternating matrices whose entries lie in certain powers of $\aa$; and
    \item the isomorphism classes of pairs $(Q,S)$, where $Q$ is a quintic ring and $S$ is a sextic resolvent ring of $Q$.
  \end{itemize}
  \item\label{main:rsv} Every quintic ring $Q$ has a resolvent, that is, appears in this bijection for some element $A \in \ZZ^4 \tensor \Lambda^2 \ZZ^5$.
  \item\label{main:max} If $Q$ is a maximal ring, then the resolvent is unique up to isomorphism, and the element $A$ is thus unique up to $\Gamma$-equivalence.
\end{enumerate}
\end{thm}

The remainder of the paper is structured as follows. In Section \ref{sec:def_rsv}, we define a suitable notion of resolvent. In Sections \ref{sec:rsv_to_ring} and \ref{sec:constr_rsv}, we prove Theorem \ref{thm:main} by making the passage from resolvents to rings and from rings to resolvents, respectively. In Section \ref{sec:sextic_ring} we touch on the ring structure on the sextic resolvent, which is noticeably absent from our definition. We close with a bound on the number of resolvents of a ring (Section \ref{sec:bounds}) and with some examples (Section \ref{sec:examples}).

\section{Defining resolvents}\label{sec:def_rsv}
Let $Q$ be a quintic ring over a Dedekind domain $R$, and let $L = Q/R$. Our first task is to generalize the notion of a \emph{sextic resolvent,} developed by Bhargava in \cite{B4} in the case $R = \ZZ$. Following the approach of Wood \cite{WQuartic} and the author \cite{ORings}, we expect the resolvent to consist of a rank-$5$ lattice $M$ (originating as $S/R$, where $S$ is a sextic ring) with two elements of some modules derived by multilinear constructions from $L$ and $M$. The orientation map $\theta$, which relates the top exterior powers of $L$ and $M$, is easy to guess. The discriminant of an $R$-algebra $T$ naturally lies in $(\Lambda^\mathrm{top}(T))^{\tensor-2}$. Just as the equality $\Disc Q = \Disc C$ between the discriminants of a quartic ring and its cubic resolvent(s) suggests an identification of the top exterior powers of the two rings, so the relation $\Disc S = (16 \Disc Q)^3$ (Bhargava's (33) of \cite{B4}) linking the discriminants of a quintic ring and its sextic resolvent(s) suggests an isomorphism
\[
  \theta : \Lambda^5 M \to (\Lambda^4 L)^{\tensor 3}.
\]
The second piece of data, that which contains the $40$ integers that actually parametrize resolvents over $\ZZ$, is slightly trickier to adapt. Bhargava presents it as a map $\phi$ from $L$ to $\Lambda^2 M$ (equivalently, from $\Lambda^2 M^*$ to $L^*$), but this does not have the correct properties in our situation. The correct construction, foreshadowed somewhat by the mysterious constant factor in Bhargava's fundamental resolvent ((28) in \cite{B4}), is to take a map
\[
  \phi : \Lambda^4 L \tensor L \to \Lambda^2 M.
\]
Finally, we must find the fundamental relations that link $\phi$ and $\theta$ to the ring structure. Just as Lemma 9 of \cite{B3} provided the inspiration for Bhargava's coordinate-free description of resolvents of a quartic ring (\cite{B3}, section 3.9), so we begin at Lemma 4(a), which, after eliminating the references to $S_5$-closure, states that
\[
  \frac{1}{2} \cdot \omega \cdot \left(
    \Pf\begin{bmatrix}
      \phi(y) & \phi(x) \\
      \phi(x) & \phi(z)
    \end{bmatrix} - 
    \Pf\begin{bmatrix}
      \phi(y) & \phi(x) \\
      \phi(x) & -\phi(z)
    \end{bmatrix}
  \right) = 1 \wedge y \wedge x \wedge z \wedge yz
\]
for a certain generator $\omega$. The Pfaffians are to be interpreted by writing $\phi(x)$, etc., as a $5\times 5$ alternating matrix with regard to any convenient basis (i.e.~viewing it as an alternating bilinear form on $\Lambda^2 M$, once a generator of $\Lambda^4 L$ is fixed). Then we paste together four of these to make a $10\times 10$ alternating matrix and take the Pfaffian. This is a clever way to manufacture certain degree-$5$ integer polynomials in the $40$ coefficients of $\phi$. To re-express them in a way that is coordinate-free and applicable in characteristic $2$, we consider two preliminary multilinear constructions.

\subsection{The quadratic map $\bx$}
Let $V$ be a $5$-dimensional vector space over a field $K$ (which we will soon take to be $\Frac R$). We examine the constructions that can be made starting with elements of $\Lambda^2 V$. We have a bilinear map $\wedge : \Lambda^2 V \cross \Lambda^2 V \to \Lambda^4 V$. However, the most fundamental map from $\Lambda^2 V$ to $\Lambda^4 V$ is not the bilinear map $\wedge$ but the quadratic map from which it arises. It is defined by
\begin{equation}\label{eq:def box}
  \left(\sum_{i=1}^n v_i \wedge w_i \right)^\bx = \sum_{1 \leq i < j \leq n} v_i \wedge w_i \wedge v_j \wedge w_j.
\end{equation}
It is not hard to prove that this is well defined. Note that if $\cha K \neq 2$, then $\bx$ can be described more simply by
\[
  \mu^\bx = \frac{1}{2} \mu \wedge \mu;
\]
if $\cha K = 2$, then $\mu \wedge \mu = 0$ yet $\bx$ is nonzero.
Moreover, the bilinear map $\wedge$ can always be recovered from $\bx$ via
\begin{equation}
  \mu \wedge \nu = (\mu + \nu)^\bx - \mu^\bx - \nu^\bx.
\end{equation}

\subsection{The contraction $\ev$}
The second construction takes one element $\mu \in \Lambda^2 V$ and two elements $\alpha,\beta \in \Lambda^4 V$ and outputs an element of a suitable one-dimensional vector space as follows. First, the perfect pairing \[
  \wedge : \Lambda^4 V \cross V \to \Lambda^5 V
\]
allows us to identify $\alpha$ and $\beta$ as elements of $\Lambda^5 V \tensor V^*$. These have a wedge product
\[
  \alpha \wedge \beta \in \Lambda^2(\Lambda^5 V \tensor V^*) \cong (\Lambda^5 V)^{\tensor 2} \tensor \Lambda^2 V^*.
\]
We now use the duality between $\Lambda^2 V^*$ and $\Lambda^2 V$, described explicitly by
\[
  (f \wedge g)(v \wedge w) = fv \cdot gw - fw \cdot gv,
\]
to obtain an element
\[
  \ev(\mu;\alpha,\beta) \in (\Lambda^5 V)^{\tensor 2}.
\]
The contraction $\ev$ is linear in each of its three arguments, and alternating in the last two.
\subsection{The definition}

We are now ready to state the definition of a sextic resolvent.
\begin{defn}
Let $Q$ be a quintic ring over a Dedekind domain $R$, and let $L = Q/R$. A \emph{resolvent} for $Q$ consists of a rank-$5$ lattice $M$ and a pair of linear maps
\[
  \phi : \Lambda^4 L \tensor L \to \Lambda^2 M \textand \theta : \Lambda^5 M \to (\Lambda^4 L)^{\tensor 3},
\]
with $\theta$ an isomorphism, satisfying the identity
\begin{equation}\label{eq:res}
  \theta^{\tensor 2}[\ev(\phi(\lambda_1x);\phi(\lambda_2y)^\bx,\phi(\lambda_3z)^\bx)]
    = \lambda_1 \lambda_2^2 \lambda_3^2 (x \wedge y \wedge z \wedge yz)
\end{equation}
where $x,y,z \in L$ and $\lambda_i \in \Lambda^4 L$ are formal variables.
\end{defn}

Note that the expression within square brackets lies in $(\Lambda^5 M)^{\tensor 2}$; applying $\theta^{\tensor 2}$, one ends up in $(\Lambda^4 L)^{\tensor 6}$ which is where the right-hand side also resides. It should also be remarked that the product $yz$ is the unique appearance of the ring structure of $Q$; translating the lifts $\tilde{y}, \tilde{z}$ by constants in $R$ simply changes the product $\tilde{y}\tilde{z}$ by multiples of $\tilde{y}$, $\tilde{z}$, and $1$, thereby not changing the product $y \wedge z \wedge yz$.

\section{Resolvent to ring}\label{sec:rsv_to_ring}
Our first task is to show that the resolvent maps $\phi$ and $\theta$ uniquely encode the multiplication data of the ring $Q$.

\begin{lem} \label{lem:quintic ring}
Let $L$ and $M$ be lattices over $R$ of ranks $4$ and $5$ respectively, and let $\phi : \Lambda^4 L \tensor L \to \Lambda^2 M$ and $\theta : \Lambda^5 M \to (\Lambda^4 L)^{\tensor 3}$ be maps. There is a quintic ring $Q$ with a quotient map $Q/R \cong L$, unique up to isomorphism, such that $(M,\phi,\theta)$ is a resolvent of $Q$.
\end{lem}
\begin{proof}
Let $(e_1,e_2,e_3,e_4)$ be a basis for $L$, by which we mean that there is a decomposition $L = \aa_1e_1 \oplus \cdots \oplus \aa_4e_4$ for some fractional ideals $\aa_i$ of $R$. (Because the Steinitz class is $\aa = \prod \aa_i$, we could take $\aa_1=\aa_2=\aa_3=(1)$ and $\aa_4 = \aa$; but we refrain from this choice for the sake of symmetry.) To place a ring structure on the module $Q = L \oplus R$, it is then necessary to choose the coefficients $c_{ij}^k \in \aa_k\aa_i^{-1}\aa_j^{-1}$ such that
\[
  e_i e_j = \sum_k c_{ij}^k e_k,
\]
with the conventions $e_0 = 1$ and $\aa_0 = (1)$. Note that the $c_{ij}^k$ with $i = 0$ or $j = 0$ are already known. Hence the ring structure is given by the $50$ coefficients $c_{ij}^k$, $1 \leq i \leq j \leq 4$, $0 \leq k \leq 4$.

Some of these coefficients are immediately determined by the resolvent. For instance, if $\{i,j,k,\ell\}$ is a permutation of $\{1,2,3,4\}$, and $\epsilon = \pm 1$ its sign, then we know
\begin{equation}\label{eq:cijk}
  c_{ij}^k = -\epsilon\etop^{-1} \cdot e_\ell \wedge e_i \wedge e_j \wedge e_ie_j = -\epsilon\omega \cdot \theta^{\tensor 2}[\ev(\phi(e_\ell\etop);\phi(e_i\etop)^\bx,\phi(e_j\etop)^\bx)],
\end{equation}
where $\etop = e_1 \wedge e_2 \wedge e_3 \wedge e_4 = \epsilon \cdot e_i \wedge e_j \wedge e_k \wedge e_\ell$ is the generator of $\Lambda^4 L$ induced by the chosen basis and
\[
  \omega = \theta^*({\etop^*}^3) \in \big((\Lambda^5 M)^*\big)^{\tensor2}.
\]
This determines the values of all $c_{ij}^k$ where $i$, $j$, and $k$ are nonzero and distinct.

Likewise, the following expressions are determined, for $i$, $j$, $k$, $\ell$ distinct:
\begin{equation}\label{eq:cijk2}
\begin{aligned}
  c_{ii}^j &= 
  \epsilon\etop^{-1} \cdot e_\ell \wedge e_i \wedge (e_i + e_k) \wedge e_i(e_i + e_k) - c_{ik}^j \\
  c_{ik}^k - c_{ij}^j &= \epsilon\etop^{-1} \cdot e_\ell \wedge e_i \wedge (e_j + e_k) \wedge e_i(e_j + e_k) - c_{ik}^j + c_{ij}^k \\
  c_{ii}^i - c_{ij}^j - c_{ik}^k &= \epsilon\etop^{-1} \cdot e_\ell \wedge (e_i + e_k) \wedge (e_i + e_j) \wedge (e_i + e_j)(e_i + e_k) \\
  &\quad - c_{jk}^i + c_{ik}^j + c_{ij}^k + c_{ii}^j + c_{ii}^k + (c_{kj}^j - c_{ki}^i) + (c_{jk}^k - c_{ji}^i).
\end{aligned}
\end{equation}
The reader familiar with ring parametrizations will recognize the left-hand sides of \eqref{eq:cijk} and \eqref{eq:cijk2} 
as the linear expressions in the $c_{ij}^k$ that are invariant under translations $e_i \mapsto e_i + t_i$ ($t_i \in \aa_i^{-1}$) of the ring basis elements (see \cite[(21)]{B4}). If we normalize our basis so that, say, $c_{12}^1 = c_{12}^2 = c_{34}^3 = c_{34}^4 = 0$, then all the $c_{ij}^k$ are now uniquely determined, except for the $c_{ij}^0$. The $c_{ij}^0$ can be computed by comparing the coefficients of $k$ in $(e_ie_j)e_k$ and $e_i(e_je_k)$ for any $k \neq i$, yielding formula (22) of \cite{B4}:
\[
  c_{ij}^0 = \sum_{r=1}^4 (c_{jk}^r c_{ri}^k - c_{ij}^r c_{rk}^k).
\]
The lemma is now reduced to three verifications.
\begin{enumerate}
  \item That all $c_{ij}^k$ belong to the correct ideals $\aa_k\aa_i^{-1}\aa_j^{-1}$. This is routine.
  \item That the $c_{ij}^0$ are well defined, and more generally that the associative law holds on the ring $Q = \sum \aa_i e_i$ that we have just constructed. This is a collection of integer polynomial identities in the $40$ free coefficients of $\phi$ in the chosen basis; as such, it was proved in the course of Bhargava's parametrization of quintic rings over $\ZZ$.
  \item That the original maps $\phi$ and $\theta$ indeed form a resolvent of $Q$, i.e.~that the identity \eqref{eq:res} holds. Since \eqref{eq:res}
   does not directly generalize any result of Bhargava, we here give the outline of a proof. We can assume that $\lambda_1 = \lambda_2 = \lambda_3 = \etop$ and $x$ is a basis element $e_\ell$, since the equation \eqref{eq:res} is linear in those variables. We can also assume that each of $y$ and $z$ is a basis element or a sum of two different basis elements, since \eqref{eq:res} is quadratic in those variables. Now we have a finite set of cases, some of which are the relations \eqref{eq:cijk} and \eqref{eq:cijk2}. The others will be reduced to them using the following properties of the underlying multilinear operations:
\begin{lem}\label{lem:boxplay}
Let $V$ be a $5$-dimensional vector space, and let $\mu,\nu,\xi \in \Lambda^2 V$ and $\alpha \in \Lambda^4 V$. Then
\begin{enumerate}[label=$(\alph*)$,ref=(\alph*)]
  \item \label{mna} $\ev(\mu; \mu \wedge \nu, \alpha) = -{\ev}(\nu; \mu^\bx, \alpha)$
  \item \label{ma} $\ev(\mu; \mu^\bx, \alpha) = 0$
  \item \label{mnx} $\ev(\nu;\mu^\bx, \mu \wedge \xi) = -{\ev}(\xi; \mu^\bx, \mu \wedge \nu)$.
\end{enumerate}
\end{lem}
\begin{proof}
For \ref{mna}, set $\mu = u\wedge v + w\wedge x$ and $\nu = y \wedge z$ (both sides being linear in $\nu$) and expand. The difference of the two sides is found to be alternating in $u,v,w,x,y,z$, hence zero, since $\Lambda^6 V = 0$. Then \ref{ma} follows by setting $\mu = \nu$, and \ref{mnx} by the derivation
\[
  \ev(\nu; \mu^\bx, \mu \wedge \xi) = -{\ev}(\mu; \mu \wedge \nu, \mu \wedge \xi)
  = \ev(\mu; \mu \wedge \xi, \mu \wedge \nu) = -{\ev}(\xi; \mu^\bx, \mu \wedge \nu).
  \qedhere
\]
\end{proof}
Now we return to proving
\begin{equation}\label{eq:res2}
  \theta^{\tensor 2}\left[{\ev}\(\phi(\etop x); \phi(\etop y)^\bx,\phi(\etop z)^\bx\)\right]
    = \etop^5 (x \wedge y \wedge z \wedge yz)
\end{equation}
for $x = e_\ell$ and $y,z \in \{e_i\}_i \union \{e_i + e_j\}_{i<j}$. The cases where $e_\ell$ does not appear in $y$ or $z$ are all subsumed by the definitions \eqref{eq:cijk} and \eqref{eq:cijk2}, with one exception: the expression for $c_{ii}^j$ is not visibly symmetric under switching $k$ and $\ell$. This can be seen by writing
\begin{align*}
  c_{ii}^j &= \epsilon\etop^{-1} (e_\ell \wedge e_i \wedge (e_i + e_k) \wedge e_i(e_i + e_k)
    - e_\ell \wedge e_i \wedge e_k \wedge e_ie_k) \\
  &= \epsilon\etop^{-5} \big({\ev}(\phi(e_\ell); \phi(e_i)^\bx,\phi(e_i + e_k)^\bx) - \ev(\phi(e_\ell); \phi(e_i)^\bx,\phi(e_k)^\bx)\big) \\
  &= \epsilon\etop^{-5} \big({\ev}(\phi(e_\ell);\phi(e_i)^\bx,\phi(e_i)^\bx + \phi(e_i) \wedge \phi(e_k) + \phi(e_k)^\bx) - \ev(\phi(e_\ell); \phi(e_i)^\bx,\phi(e_k)^\bx)\big) \\
  &= \epsilon\etop^{-5} \big({\ev}(\phi(e_\ell; \phi(e_i)^\bx,\phi(e_i) \wedge \phi(e_k))\big)
\end{align*}
and using Lemma \ref{lem:boxplay}\ref{mnx}.
It remains to dispose of the cases where $e_\ell$ does appear in $y$ or $z$. We prove them by induction on the total number of $e$ terms in $x$, $y$, and $z$, the base cases being those already shown. Suppose that $x = e_\ell$ appears in $x + y = e_\ell + e_k$ (the case where $x$ appears in $z$ is symmetric). For brevity let $\lambda = \phi(\etop x)$, $\mu = \phi(\etop y)$, $\nu = \phi(\etop z)$. Then
\begin{align*}
   &\theta^{\tensor 2}\left[\ev\(\phi(\etop x); \phi(\etop (x + y))^\bx,\phi(\etop z)^\bx\)\right] \\ 
   &=\theta^{\tensor 2}\left[\ev\(\lambda; (\lambda + \mu)^\bx, \nu^\bx \)\right] \\
   &=\theta^{\tensor 2}\left[\ev\(\lambda; \lambda^\bx + \lambda \wedge \mu + \mu^\bx, \nu^\bx \)\right] \\
   &= \theta^{\tensor 2}\left[-{\ev\(\mu; \lambda^\bx, \nu^\bx\)} + \ev\(\lambda; \mu^\bx, \nu^\bx\)\right] \\
   &= \etop^5 (-y \wedge x \wedge z \wedge xz + x \wedge y \wedge z \wedge yz) \\
   &= \etop^5 (x \wedge (x + y) \wedge z \wedge (x + y)z),
\end{align*}
where the induction hypothesis applies since $(x,y,z)$ and $(y,x,z)$ both have fewer total $e$ terms than $(x, x + y, z)$. \qedhere
\end{enumerate}
\end{proof}

\subsection{Omission of $\theta$}
With our definition, it is natural to wonder what happens when the datum $\theta$ is changed. The answer is simple:
\begin{prop}
If $\theta$ is scaled by a unit $\gamma \in R^\cross$, the corresponding quintic ring is unchanged, up to isomorphism.
\end{prop}
\begin{proof}
More strongly, the \emph{resolvent} itself is unchanged up to isomorphism. Observe that the scalar multiplications $\gamma^2 : L \to L$ and $\gamma^5 : M \to M$ take one resolvent to the other, thanks to the commutative diagrams:
\begin{equation}
\xymatrix{
  \Lambda^4 L \ar[d]_{\Lambda^{4}(\gamma^2) \tensor \gamma^2 = \gamma^{10}} \tensor L \ar[r]^\phi & \Lambda^2 M \ar[d]_{\Lambda^2 (\gamma^5) = \gamma^{10}} & \Lambda^5 M \ar[d]^{\Lambda^{5} \gamma^5 = \gamma^{25}} \ar[r]^{\gamma \theta} & (\Lambda^4 L)^{\tensor 3} \ar[d]^{\(\Lambda^{4}(\gamma^2)\)^{\tensor 3} = \gamma^{24}} \\
  \Lambda^4 L \tensor L \ar[r]^\phi & \Lambda^2 M & \Lambda^5 M \ar[r]^{\theta} & (\Lambda^4 L)^{\tensor 3}
}
\qedhere
\end{equation}
\end{proof}
In spite of this, we retain the datum $\theta$ in our definition of resolvent, as it makes all the resolvent conditions polynomial relations, without an existential quantifier, and eases base change.
\begin{proof}[Proof of Theorem \ref{thm:main}\ref{main:param}:]
Let $(L, M, \phi, \theta)$ be a resolvent of Steinitz class $\aa$. The modules $L \isom L_\aa$ and $M \isom M_\aa$ are known up to isomorphism. If isomorphisms are chosen, then $\phi \in (\Lambda^4 L \tensor L)^2 \tensor \Lambda^2 M \isom \aa^{-1} \tensor L_\aa^* \tensor \Lambda^2 M_{\aa}$ is realized as an element of the desired lattice, unique up to $\GL(L) \cross \GL(M)$. Conversely, if such a $\phi$ is given, then due to the redundancy of $\theta$, we get a resolvent unique up to isomorphism.
\end{proof}

\subsection{Compatibility with Bhargava's definitions}

If $\aa = (1)$, that is, $L$ and $M$ are free over $R$, the resolvent devolves into the basis representation of $\phi$. This has $40$ independent entries which can be arranged into a quadruple of $5\times 5$ alternating matrices, representing the values $\phi(x)$ (as $x$ runs through a basis of $L$) as alternating bilinear forms on $M^*$. The coefficients $c_{ij}^k$ of the ring we have constructed are certain degree-$5$ polynomials in these $40$ entries which are easily identified with the formulas given in (21) of \cite{B4}. Thus our definition of resolvent is compatible with Bhargava's (Definition 10), which justifies our invocation of his computations in our situation, despite the dissimilarities of the definitions.

\section{Constructing resolvents}\label{sec:constr_rsv}
We now wish to go the other way and prove Theorem \ref{thm:main}\ref{main:rsv}: every quintic ring over a Dedekind domain admits at least one resolvent. We begin with the case where $R = K$ is a field.

In the quartic case \cite{B3,ORings}, it was the \emph{trivial ring} $T = K[x,y,z]/(x,y,z)^2$ that had the largest family, all other rings having a unique resolvent. Likewise, here we separate out some of the most degenerate rings, those of the last three types $A_{18},A_{19},A_{20}$ in the classification of Mazzola \cite[p.292]{Maz80}:
\begin{defn}
A quintic algebra $Q$ over $K$ is \emph{very degenerate} if it has subspaces $Q_4 \subseteq Q_3$, of dimension $4$ and $3$ respectively, such that $Q_4Q_3 = 0$ (that is, the product of any element of $Q_4$ and any element of $Q_3$ is zero). A quintic algebra $Q$ over $R$ is \emph{very degenerate} if the corresponding $K$-algebra $Q \tensor_R K$ is.
\end{defn}
Let $Q$ be a very degenerate quintic ring over $R$. Upon taking a suitable basis, the multiplication table of $Q$ has the form
\begin{equation} \label{eq:very deg table}
\begin{tabular}{c|ccccc}
$\times$ & $1$ & $\xi_1$ & $\xi_2$ & $\xi_3$ & $\xi_4$ \\ \hline
$1$ & $1$ & $\xi_1$ & $\xi_2$ & $\xi_3$ & $\xi_4$ \\
$\xi_1$ & $\xi_1$ & $c_{11}^1\xi_1 + c_{11}^2\xi_2$ & $0$ & $0$ & $0$ \\
$\xi_2$ & $\xi_2$ & $0$ & $0$ & $0$ & $0$ \\
$\xi_3$ & $\xi_3$ & $0$ & $0$ & $0$ & $0$ \\
$\xi_4$ & $\xi_4$ & $0$ & $0$ & $0$ & $0$
\end{tabular}
\end{equation}
in which at most two of the structure constants $c_{ij}^k$ are nonzero. It is easy to compute from the definition that $Q$ has a family of resolvents of the form
\[
A = \left(
\begin{bmatrix}
 0 & * & *         & *        & -1 \\
 * & 0 & 0         & 0        & 0  \\
 * & 0 & 0         & c_{11}^2 & 0  \\
 * & 0 & -c_{11}^2 & 0        & 0  \\
 1 & 0 & 0         & 0        & 0  
\end{bmatrix},
\begin{bmatrix}
  0 & * & *         & *        & 0 \\
  * & 0 & 0         & 0        & 0 \\
  * & 0 & 0         & c_{11}^1 & 0 \\
  * & 0 & -c_{11}^1 & 0        & 0 \\
  0 & 0 & 0         & 0        & 0
\end{bmatrix},
\begin{bmatrix}
  0 & * & * & * & 0 \\
  * & 0 & 0 & -1 & 0 \\
  * & 0 & 0 & 0 & 0 \\
  * & 1 & 0 & 0 & 0 \\
  0 & 0 & 0 & 0 & 0 \\
\end{bmatrix},
\begin{bmatrix}
  0 & * & * & * & 0 \\
  * & 0 & 1 & 0 & 0 \\
  * & -1 & 0 & 0 & 0 \\
  * & 0 & 0 & 0 & 0 \\
  0 & 0 & 0 & 0 & 0 \\
\end{bmatrix}
\right),
\]
with respect to bases for $L_\aa \isom R \oplus R \oplus R \oplus \aa, M_\aa \isom \aa \oplus \aa \oplus \aa \oplus R \oplus R$, where $*$ denotes any element of the appropriate ideal. We can therefore ignore very degenerate rings in the sequel.
\subsection{Resolvents over a field}
\begin{thm}\label{thm:field}
Every not very degenerate quintic $K$-algebra has a unique resolvent up to isomorphism.
\end{thm}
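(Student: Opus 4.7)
The plan is to prove existence and uniqueness separately. For existence, I would rely on the explicit construction in \cite{B4}: given any basis of $L = Q/K$, Bhargava constructs a resolvent $(M, \phi, \theta)$ for each quintic ring via formulas polynomial in the multiplication constants. These formulas are valid over any base ring, in particular over a field $K$, and the identity \eqref{eq:res} follows from the polynomial identities Bhargava verified. The resulting $\phi$ has its $40$ matrix entries (in a chosen basis of $M = K^5$) given by degree-$1$ polynomial expressions one can read off from Bhargava's work, and $\theta$ is fixed by a compatible choice of generator of $\Lambda^5 M$.

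For uniqueness, suppose $(M, \phi, \theta)$ and $(M', \phi', \theta')$ both resolve $Q$. After rescaling to match $\theta$ with $\theta'$, the goal is a volume-preserving isomorphism $g : M \to M'$ with $\Lambda^2 g \circ \phi = \phi'$. Fixing a basis $(e_i)$ of $L$ and a generator $\etop \in \Lambda^4 L$, set $\phi_i := \phi(\etop \otimes e_i) \in \Lambda^2 M$, and similarly $\phi'_i \in \Lambda^2 M'$. Both $4$-tuples satisfy the system of polynomial constraints dictated by \eqref{eq:cijk} and \eqref{eq:cijk2}, read as equations whose right-hand sides are fixed by the multiplication of $Q$. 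The task reduces to showing that any two such $4$-tuples are $SL(M)$-conjugate under the change-of-basis action.

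I would base-change to $\bar K$ and decompose $Q \otimes_K \bar K$ as a product of local $\bar K$-algebras; there is a short, finite list of possible decomposition types, indexed by partitions of $5$ together with isomorphism classes of local factors of each dimension. For each non-very-degenerate type, the ring structure is rigid enough to extract a canonical basis of $M$: in the étale case, the idempotents of $Q$ act on $\Lambda^2 M$ through the ring action on $L$ followed by $\phi$, splitting $\Lambda^2 M$ into joint eigenspaces whose structure recovers a distinguished basis of $M$; in nilpotent cases one uses the filtration of $Q$ by powers of its Jacobson radical to get an analogous rigidification. Once a canonical basis is pinned down on each side, $g$ is forced. Galois descent then returns the isomorphism to $K$, since the geometric stabilizer of the data is trivial in each non-very-degenerate case.

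The main obstacle is the case analysis for nearly-very-degenerate algebras, such as $Q = \bar K \oplus \bar K[x,y,z]/(x,y,z)^2$ or $Q$ whose nilradical has dimension $3$ or $4$ but violates the condition $Q_4 Q_3 = 0$. In these marginal cases one must verify explicitly that the polynomial constraints on $(\phi_i)$ still pin down the $SL(M)$-orbit, something that will barely hold for the non-very-degenerate algebras and fail spectacularly in the very degenerate setting of \eqref{eq:very deg table}. Locating the precise threshold at which the stabilizer jumps from finite to positive-dimensional is what will isolate the very degenerate class as the unique obstruction to uniqueness and thereby justify the hypothesis of the theorem.
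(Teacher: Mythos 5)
There is a genuine gap: your proposal is a plan whose hard steps are deferred rather than carried out, and both halves miss the mechanism that actually makes the theorem work. On existence, there are no polynomial (certainly not degree-$1$) formulas for $\phi$ in terms of the structure constants of $Q$; Bhargava's explicit polynomials go in the opposite direction (the $c_{ij}^k$ are degree-$5$ polynomials in the $40$ entries of $\phi$), and inverting them is exactly the content of the theorem. The paper's candidate for $\phi$ is the \emph{rational} formula \eqref{eq:phibox}, whose denominator is a value of the pentaquadratic form $F$ of \eqref{eq:def F}; non-very-degeneracy is needed precisely so that some such denominator is nonzero (Lemma \ref{lem:very deg}: $F \equiv 0$ iff $Q$ is very degenerate). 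Even then, verifying that the candidate satisfies \eqref{eq:res} is nontrivial and the paper resorts to base change to $\bar K$ plus a deformation argument from $\bar K^{\oplus 5}$ (Example \ref{ex:first}); your appeal to ``polynomial identities Bhargava verified'' does not engage with this.

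On uniqueness, your reduction to an $\SL(M)$-conjugacy statement is the right target, but the route through a case analysis of decomposition types over $\bar K$ followed by Galois descent is both unexecuted and hazardous: the geometric stabilizer of the resolvent data need not be trivial (for $\bar K^{\oplus 5}$ the automorphism group $S_5$ acts on $M$), so descent requires checking, algebra by algebra through Mazzola's list, that the stabilizer of $\phi$ fixing $Q$ pointwise is trivial --- and you explicitly leave the marginal cases open. The paper avoids all of this by first pinning down the auxiliary quadratic map $\phi^\bx$: Lemma \ref{lem:tobox} yields the identity \eqref{eq:prop F}, $\theta^{\tensor 4}(\phi^\bx(a)\wedge\cdots\wedge\phi^\bx(e)) = F(a,\ldots,e)$, so once $F(a_1,\ldots,a_5)\neq 0$ the vectors $\phi^\bx(a_i)$ form a volume-normalized basis of $\Lambda^4 M$, unique up to $\SL(M)$, and $\phi^\bx$ is forced by Cramer's rule \eqref{eq:phibox}; then $\phi$ itself is recovered linearly from the ten contractions $\phi(x)(\phi^\bx(a_i),\phi^\bx(a_j)) = x\wedge a_i\wedge a_j\wedge a_ia_j$. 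This is a single coordinate-free argument over $K$ itself, with no case analysis and no descent, and it is the step your proposal is missing.
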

\begin{proof}
Let $M$ be a $K$-vector space of dimension $5$, and let $\theta : \Lambda^5 M \to (\Lambda^4 L)^{\tensor 3}$ be any isomorphism. So far we have not made any choices. We will first construct the map $\phi^\bx = \phi(\bullet)^\bx$, a quadratic map from $\Lambda^4 L \tensor L$ to $\Lambda^4 M$. For this purpose we concoct a corollary of \eqref{eq:res} that involves only $\phi^\bx$.
\begin{lem}\label{lem:tobox}
Let $V$ be a $5$-dimensional vector space. Let $\mu \in \Lambda^2 V$ and $\alpha,\beta,\gamma,\delta \in \Lambda^4 V$. Then
\[
  \mu^\bx \wedge \alpha \wedge \beta \wedge \gamma \wedge \delta =
  \ev(\mu; \alpha, \beta) \ev(\mu; \gamma, \delta)
  + \ev(\mu; \alpha, \gamma) \ev(\mu; \delta, \beta)
  + \ev(\mu; \alpha, \delta) \ev(\mu; \beta, \gamma)
\]
in $\Lambda^5(\Lambda^4 V) \cong (\Lambda^5 V)^{\tensor 4}$.
\end{lem}
\begin{proof}
Write the general $\mu$ as $u\wedge v + w\wedge x$ ($u,v,w,x\in V$) and expand.
\end{proof}

Motivated by this, we define for any quintic ring $Q$ the pentaquadratic form
\begin{equation}\label{eq:def F}
\begin{aligned}
  F(a,b,c,d,e)
  &= (a \wedge b \wedge c \wedge bc)(a \wedge d \wedge e \wedge de) \\
  &\quad{} + (a \wedge b \wedge d \wedge bd)(a \wedge e \wedge c \wedge ec)
  + (a \wedge b \wedge e \wedge be)(a \wedge c \wedge d \wedge cd)
\end{aligned}
\end{equation}
from $L^5$ to $(\Lambda^4 L)^{\tensor 2}$, or equivalently from $(\Lambda^4 L \tensor L)^5$ to $(\Lambda^4 L)^{\tensor 12}$. We get that for any resolvent $(M,\phi,\theta)$ of $Q$,
\begin{equation} \label{eq:prop F}
  \theta^{\tensor 4}(\phi(a)^\bx \wedge \phi(b)^\bx \wedge \phi(c)^\bx \wedge \phi(d)^\bx \wedge \phi(e)^\bx) = F(a,b,c,d,e).
\end{equation}
We claim the following:
\begin{lem}\label{lem:very deg}
$F$ is identically zero if and only if $Q$ is very degenerate.
\end{lem}
\begin{proof}
We prove that the property of being very degenerate is invariant under base-changing to the algebraic closure $\bar{K}$ of $K$; then the lemma can be proved by checking the finitely many quintic algebras over an algebraically closed field (see Mazzola \cite{Maz80} and Poonen \cite{PoonenIsomorphism}). Let $\bar{Q} = Q \tensor_K \bar{K}$ be the corresponding $\bar{K}$-algebra. Clearly if $Q$ is very degenerate, so is $\bar{Q}$, so assume that $\bar{Q}$ is very degenerate. 

First look at the reduced $Q_\red$, $\bar Q_\red$ formed by quotienting out by the nilpotents. Since $\bar Q$ is very degenerate, $\bar Q_\red$ is isomorphic to either $\bar K$ or $\bar K \cross \bar K$, the latter case occurring when
\begin{equation}\label{eq:1+4}
  \bar Q \isom \bar K \cross \bar K[\epsilon_1, \epsilon_2, \epsilon_3]/\<\epsilon_1, \epsilon_2, \epsilon_3\>^2.
\end{equation}
\begin{case}
If $\bar Q_\red \isom \bar K$, then $Q_\red$ must be a field, and its degree must divide $5$: so $Q_\red = Q$ or $Q_\red \isom K$. If $Q_\red = Q$, then $\bar Q$ is either totally split or (in the purely inseparable case, which occurs only in characteristic $5$) $\bar Q \isom \bar K[\epsilon]/\<\epsilon^5\>$, which is not a very degenerate ring. If $Q_\red \isom K$, then there is a distinguished section $s : L \hookrightarrow Q$, namely the isomorphism onto the $4$-dimensional subspace of nilpotents. The condition that $Q$ be very degenerate can be stated as saying that the multiplication tensor
\begin{align*}
  \operatorname{mult} &\in L^* \tensor L^* \tensor L \\
  \mult(x,y) &= s(x)s(y) \bmod K
\end{align*}
has rank $1$, that is, is an elementary tensor. This is a condition invariant under base change (incidentally, it is given by an intersection of quadrics which are the coefficients of $F$).
\end{case}
\begin{case}
  If $\bar Q_\red \isom \bar K \cross \bar K$, then we have \eqref{eq:1+4}. As the rank-$1$ idempotent is unique, $Q \cong K \cross Q'$ must also split as a product of factors of the correct degrees. Then $Q$ is very degenerate if and only if $Q'$ is the trivial ring, that is, all the entries of its multiplication table are $0$, and this too is invariant under base change. \qedhere
\end{case}
\end{proof}
Picking $a_1,\ldots,a_5 \in \Lambda^4 L \tensor L$ such that $F(a_1,a_2,a_3,a_4,a_5) = f_0 \neq 0$, we get that the five vectors $v_i = \phi(a_i)^\bx$ must form a basis such that
\[
  \theta^{\tensor 4}(v_1 \wedge v_2 \wedge v_3 \wedge v_4 \wedge v_5) = f_0.
\]
Any such basis is as good as any other: they are all related by elements of $\SL(\Lambda^4 M)$, which is canonically isomorphic to $\SL(M)$. Once the $v_i$ are fixed, there is at most one candidate for the map $\phi^\bx$ up to $\SL(M)$-equivalence, namely
\begin{equation}\label{eq:phibox}
  \phi(a)^\bx = \frac{1}{f_0} \sum_{i=1}^5 F(a_1,\ldots, \hat{a}_i, \ldots, a_5) v_i
\end{equation}

Then the relations
\[
  \ev(\phi(x); \phi(a_i)^\bx,\phi(a_j)^\bx) = x \wedge a_i \wedge a_j \wedge a_ia_j,
\]
for $1 \leq i < j \leq 5$, determine the map $\phi$ uniquely. So the resolvent map $\phi$, if it exists, is unique. It remains to verify the resolvent relations, which are finite in number since the $a_i$ in \eqref{eq:phibox} can be chosen from the finite set $\{e_1,e_2,e_3,e_4,e_1+e_2,e_1+e_3,\ldots,e_3+e_4\}$ for any basis $\{e_1,e_2,e_3,e_4\}$ of $\Lambda^4 L \tensor L$. It remains to prove that the $(M,\phi,\theta)$ we have hereby constructed is actually a resolvent; this is a collection of integer polynomial identities, not in a family of free variables as in the previous lemma, but in the coefficients $c_{ij}^k$ of the given ring $Q$, which are restricted by the associative law. To prove these identities, it is enough to change base to the algebraic closure $\bar{K}$ and exhibit a resolvent for each of the finitely many quintic $\bar K$-algebras found in the classification of Mazzola and Poonen. For $\bar{K}^{\oplus 5}$, the unique nondegenerate quintic $\bar{K}$-algebra, the resolvent is shown in Example \ref{ex:first}. This takes care of all the $Q$ which are limits of \'etale algebras in the variety of based algebras (which is true for all $Q$ in characteristic $0$, as Mazzola observes, and is most likely true in characteristic $p$).
\end{proof}

\subsection{Resolvents over a Dedekind domain}
\label{sec:ded res}
We now want to endow our resolvents with integral structure.

\begin{proof}
Let $Q$ be a quintic ring over a Dedekind domain $R$. We will assume that $Q$ is not very degenerate and hence that the corresponding $K$-algebra $Q_K = Q\tensor_R K$ has a unique resolvent $(M_K, \phi,\theta)$. Resolvents of $Q$ are now in bijection with lattices $M$ in the vector space $M_K$ such that
\begin{gather}
  \phi(\Lambda^4 L \tensor L) \subseteq \Lambda^2 M \label{eq:phicond} \\
  \theta(\Lambda^5 M) \subseteq (\Lambda^4 L)^{\tensor 3}. \label{eq:thetacond}
\end{gather}

For any resolvent $M$, note that we must have
\[
  M^* \cong \Lambda^4 M \tensor (M^5)^{\tensor -1} \supseteq \<\phi^\bx(\Lambda^4 L \tensor L)\> \tensor (\theta((\Lambda^4 L)^{\tensor 3}))^{\tensor -1}.
\]
Here $\<\phi^\bx(\Lambda^4 L \tensor L)\>$ means the closure of the image of the quadratic map $\phi^\bx$ under addition and $\OO_K$-multiplication. Since $Q$ is not very degenerate, the right-hand side is a lattice of full rank and we may take its dual, which we denote by $M_0$. Then any resolvent is contained in $M_0$. Condition \eqref{eq:phicond} is vacuous for $M = M_0$, since
  \[
    \phi(\lambda x)(\phi(\lambda' y)^\bx,\phi(\lambda'' z)^\bx)
    = \theta^{\tensor 2}(\lambda\lambda'\lambda''(x \wedge y \wedge z \wedge yz)) \in (\theta(\Lambda^3 L))^{\tensor 2}
  \]
  for all $\lambda', \lambda'' \in \Lambda^3 L$ and $y,z \in L$. On the other hand, condition \eqref{eq:thetacond} is generally not satisfied by $M = M_0$; indeed, one readily finds that $\theta^{-1}((\Lambda^4 L)^{\tensor 3}) \subseteq \Lambda^5 M_0$ using \eqref{eq:prop F}.

The classification of resolvents is now reduced to a local problem. Any $M$ determines a family of resolvents $(M_\pp,\phi,\theta)$ of the quintic algebras $Q_\pp$ over the DVR's $R_\pp \subseteq K$, and conversely an arbitrary choice of resolvents $M_\pp$ of the $R_\pp$ can be glued together to form the resolvent $M = \bigcap_\pp M_\pp$. The choice $M_\pp = M_{0,\pp} = M_0 \tensor R_\pp$ is forced for all but finitely many primes $\pp$, namely those dividing the ideal
\begin{equation} \label{eq:cc}
  \cc = [\Lambda^5 M_0 : \theta^{-1}((\Lambda^4 L)^{\tensor 3})] = [(\Lambda^4 L)^{\tensor 2} : \<F(a,b,c,d,e) : a,b,c,d,e \in L\>].
\end{equation}
Therefore, for the remainder of the proof, we assume that $R = R_\pp$ is a DVR. We adapt the method of proof of Bhargava \cite{B4}, Lemmas 13--15.

We first prove that for some $n \geq 0$, the module $\pi^n L$, which corresponds to the ring $R + \pi^{n} Q$, has a resolvent. Let $M_1$ be any lattice of the correct index $\cc$ in $M_0$. With respect to any bases of $L$ and $M_0$, the map $\phi$ is represented by a box $A = (A_1, A_2, A_3, A_4)$ whose entries are in $K$. If we pass from $L$ to $\pi^{5} L$ and from $M_1$ to $\pi^{12} M_1$, the discriminant condition remains satisfied and the entries are multiplied by $\pi$. Performing this operation enough times, the entries become integral. 

We now attempt to lower the exponent $n$ for which $\pi^n L$ has a resolvent $M$ until it becomes zero.

First, the quadratic map $\phi^\bx : L \to \Lambda^4 V \isom \Lambda^5 V \tensor V^*$ is determined by its values at the ten vectors $e_1, \ldots, e_4, e_1 + e_2, \ldots, e_3 + e_4$, where the $e_i$ form a basis of $L$. Wedging any five of them yields a vector in $\Lambda^5(\Lambda^4 V) \cong (\Lambda^5 V)^{\tensor 4}$; these are the $\binom{10}{5} = 252$ \emph{determinantal invariants} of \cite{B4}. Being invariant under $\SL(V)$, they are quadratic polynomials in the fundamental invariants for the action of $\SL(V)$ on boxes $A$, namely the ring coefficients $c_{ij}^k$. Of interest to us is that these polynomials have coefficients in $\ZZ$. Since $L$ defines a ring, the $c_{ij}^k$ of $\pi^n$ are divisible by $\pi^n$ and the determinantal invariants are divisible by $\pi^{2n}$, that is, lie in $\Lambda^5(M)^{\tensor 4}$.

Let $A = (A_1, A_2, A_3, A_4)$ be the $4 \times 5 \times 5$ box, with entries in $R$, corresponding to $\pi^{2n} L$ and its resolvent $M$. Since the determinantal invariants are all linearly dependent mod $\pi$, the image of $\tilde \phi^\bx = \pi^{2n} \phi^\bx$ lies in a sublattice of $M_1$ of index $\pi$; that is, with respect to a suitable basis, the upper $4\times 4$ submatrices of the $A_i$ and their $R$-linear combinations are all singular modulo $\pi$. As explained in the proof of Lemma 15 in \cite{B4}, it is possible to change bases further so that these submatrices are either all $0$ mod $\pi$ or have one of the forms
\begin{gather}
  \left(
  \begin{bmatrix}
    0 & 1 & 0 & 0 \\
    -1 & 0 & 0 & 0 \\
    0 & 0 & 0 & 0 \\
    0 & 0 & 0 & 0
  \end{bmatrix},
  \begin{bmatrix}
    0 & 0 & c_1 & 0 \\
    0 & 0 & 0 & 0 \\
    -c_1 & 0 & 0 & 0 \\
    0 & 0 & 0 & 0
  \end{bmatrix},
  \begin{bmatrix}
    0 & 0 & 0 & c_2 \\
    0 & 0 & 0 & 0 \\
    0 & 0 & 0 & 0 \\
    -c_2 & 0 & 0 & 0
  \end{bmatrix},
\begin{bmatrix}
0 & 0 & 0 & 0 \\
0 & 0 & 0 & 0 \\
0 & 0 & 0 & 0 \\
0 & 0 & 0 & 0
\end{bmatrix}
\right) \label{eq:red1} \\
\left(
\begin{bmatrix}
0 & 1 & 0 & 0 \\
-1 & 0 & 0 & 0 \\
0 & 0 & 0 & 0 \\
0 & 0 & 0 & 0
\end{bmatrix},
\begin{bmatrix}
0 & 0 & 1 & 0 \\
0 & 0 & 0 & 0 \\
-1 & 0 & 0 & 0 \\
0 & 0 & 0 & 0
\end{bmatrix},
\begin{bmatrix}
0 & 0 & 0 & 0 \\
0 & 0 & 1 & 0 \\
0 & -1 & 0 & 0 \\
0 & 0 & 0 & 0
\end{bmatrix},
\begin{bmatrix}
0 & 0 & 0 & 0 \\
0 & 0 & 0 & 0 \\
0 & 0 & 0 & 0 \\
0 & 0 & 0 & 0
\end{bmatrix}
\right). \label{eq:red2}
\end{gather}
If the submatrices are zero or of the form \eqref{eq:red1} modulo $\pi$, we use the fact that the lower right $3\times 3$ submatrices of each matrix in $A$ is divisible by $p$ to derive that $\pi^{n-1}L$ has the resolvent
\[
M' = \pi^{-3}\<\mu_1, \pi\mu_2, \pi\mu_3, \pi\mu_4, \mu_5\>
\]
where $\{\mu_i\}$ is the basis of $M$ in which $A$ has been written. In the case $\eqref{eq:red2}$, from $p \mid c_{23}^4 = \ev(\phi(e_1); \phi(e_2)^\bx, \pi(e_3)^\bx)$ we get that the $(4,5)$ entry of $A_1$ is zero mod $p$. Symmetrically we get the same for $A_2$ and $A_3$, and also for $A_4$ since replacing $A_2$ by $A_2 + A_4$ does not change the situation. So
the entire fourth rows and fourth columns of $A$ are divisible by $p$, so $\pi^{n-2}L$ has the resolvent
\[
M' = \pi^{-5}\<\mu_1, \mu_2, \mu_3, \pi\mu_4, \mu_5\>.
\]

Thus, as long as $n > 0$, we can lower $n$. In the case that $n$ becomes negative, we can raise it back to $0$ by the method of proof of Lemma 13 of \cite{B4}.
\end{proof}

In the special case that $\cc$ is the unit ideal, $M_0$ is the only resolvent. This occurs in one important instance, highlighted in Theorem \ref{thm:main}\ref{main:max}.
\begin{lem} \label{lem:maximal}
If $Q$ is a \emph{maximal} quintic ring, that is, is not contained in any strictly larger quintic ring, then the ideal $\cc$ in \eqref{eq:cc} is the unit ideal, implying that $Q$ has a unique resolvent.
\end{lem}
\begin{proof}
Suppose that $\cc$ were not the unit ideal, so there is a prime $\pp$ such that $\pp | F(a,b,c,d,e)$ for all $a,b,c,d,e \in L$. We will prove that $Q$ is not maximal at $\pp$. It is convenient to localize and assume that $R = R_\pp$ is a DVR with uniformizer $\pi$.

Note that $Q/\pp Q$, a quintic algebra over $R/\pp$, has its associated pentaquadratic form $F$ identically zero, so by Lemma \ref{lem:very deg}, it is very degenerate. So $Q$ has an $R$-basis $(1,x,\epsilon_1,\epsilon_2,\epsilon_3)$ such that $(x,\epsilon_1,\epsilon_2,\epsilon_3)(\epsilon_1,\epsilon_2,\epsilon_3) \subseteq \pp R$. We claim that the lattice $Q'$ with basis $(1,x,\pi^{-1}\epsilon_1,\pi^{-1}\epsilon_2,\pi^{-1}\epsilon_3)$ either is a quintic ring or is contained in a quintic ring, showing that $Q$ is not maximal.

Set $M = \<\pi,x,\epsilon_1,\epsilon_2,\epsilon_3\>$ and $N = \<\pi,\pi x, \epsilon_1,\epsilon_2,\epsilon_3\>$. Then $Q \supseteq M \supseteq N \supseteq \pi Q$ and $MN \subseteq \pi Q$. Consider, for any $i,j \in \{1,2,3\}$, the multiplication maps
\[
\xymatrix{
  {Q}/{N} \ar[r]^{\epsilon_i} \ar@{=}[d] &
  {N}/{\pi Q} \ar[r]^{\epsilon_j} \ar@{=}[d] & 
  {\pi Q}/{\pi N} \ar[r]^{\epsilon_i} \ar@{=}[d] &
  {\pi N}/{\pi^2 Q} \ar@{=}[d] \\
  \<1,x\> & \<\epsilon_1,\epsilon_2,\epsilon_3\> & \<\pi,\pi x\> & \<\pi\epsilon_1,\pi\epsilon_2,\pi\epsilon_3\>.
}
\]
These are all linear maps of $R/\pp$-vector spaces. Denote by $f$ the composition of the left two maps and by $g$ the composition of the right two. Write $f(1) = \pi(a + bx)$, where $a,b \in R/\pp$. Then $g(\epsilon_i) = a\pi\epsilon_i$, since $x\epsilon_i \in \pi Q$. Thus $g$ is given in the bases above by the scalar matrix $a$. But $g$ has rank at most $2$, since it factors through the two-dimensional space $\pi Q/\pi N$; hence $a = 0$. So $N^2 \subseteq \pi M$.

Now consider the following multiplication maps:
\[
\xymatrix{
  {Q}/{M} \ar[r]^{\epsilon_i} \ar@{=}[d] &
  {N}/{\pi Q} \ar[r]^{\epsilon_j} \ar@{=}[d] &
  {\pi M}/{\pi N} \ar[r]^{\epsilon_k} \ar@{=}[d] &
  {\pi^2 Q}/{\pi^2 M} \ar[r]^{\epsilon_i} \ar@{=}[d] &
  {\pi^2 N}/{\pi^3 Q} \ar@{=}[d] \\
  \<1\> & \<\epsilon_1,\epsilon_2,\epsilon_3\> & \<\pi x\> & \<\pi^2\> & \<\pi^2\epsilon_1,\pi^2\epsilon_2,\pi^2\epsilon_3\>.
}
\]
Similarly to the previous argument, the composition of the first three maps must be zero, or else the composition of the last three would be a nonzero scalar. Since the images of the first map (as $i$ varies) span $N/\pi Q$, the composition of the middle two maps is always zero. Since $j$ and $k$ can vary independently and $\pi M/\pi N$ is one-dimensional, there are two cases:
\begin{enumerate}[(a)]
  \item The second map is always zero, that is, $N^2 \subseteq \pi N$. This implies that $\pi^{-1} N$ is a quintic ring, as desired.
  \item The third map is always zero, that is, $MN \subseteq \pi M$. We get that $\pi^{-1}\epsilon_i$ is integral over $R$ (look at the characteristic polynomial of its action on $M$), so $R[\pi^{-1}\epsilon_1,\pi^{-1}\epsilon_2,\pi^{-1}\epsilon_3]$ is finitely generated and thus a quintic ring, as desired.
\end{enumerate}
\end{proof}
Note that, in this proof, if the resolvent is not unique, then the extension $Q' \supsetneq Q$ has $(R/\pp)^3 \subseteq Q'/Q$. So the following stronger theorem holds:
\begin{thm}\label{thm:strong maximal}
If $Q$ is a quintic ring such that the $R/\pp$-vector space of congruence classes in $\pi^{-1}Q/Q$ whose elements are integral over $R$ has dimension at most $2$, for each prime $\pp$, then $Q$ has a unique resolvent.
\end{thm}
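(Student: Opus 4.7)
The plan is to rerun the argument of Theorem \ref{thm:maximal}, extracting the extra information, already implicit there, that the overring $Q'$ produced lies in $\pi^{-1}Q$ and contributes a full $3$-dimensional subspace of integral congruence classes modulo $Q$.

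First I would localize at an arbitrary prime $\pp$ of $R$ and suppose, toward a contradiction, that $\cc$ is not the unit ideal at $\pp$, i.e.\ that the resolvent fails to be unique at $\pp$. As in the preceding proof, this forces $F \equiv 0$ on $Q/\pp Q$, so $Q/\pp Q$ is very degenerate by Lemma \ref{lem:very deg} and $Q$ admits an $R$-basis $(1, x, \epsilon_1, \epsilon_2, \epsilon_3)$ with $(x, \epsilon_1, \epsilon_2, \epsilon_3)(\epsilon_1, \epsilon_2, \epsilon_3) \subseteq \pp Q$. The dichotomy in the proof of Theorem \ref{thm:maximal} then yields a quintic overring $Q' \supsetneq Q$: either $Q' = \pi^{-1} N$ in case (a), or $Q' = R[\pi^{-1}\epsilon_1, \pi^{-1}\epsilon_2, \pi^{-1}\epsilon_3]$ in case (b).

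Next I would observe that in \emph{both} cases, the three elements $\pi^{-1}\epsilon_1, \pi^{-1}\epsilon_2, \pi^{-1}\epsilon_3$ lie in $Q'$. Since $Q'$ is a quintic ring, each of these elements is automatically integral over $R$; and since $\epsilon_1, \epsilon_2, \epsilon_3$ are part of an $R$-basis of $Q$, the classes $\pi^{-1}\epsilon_i + Q \in \pi^{-1}Q/Q$ are linearly independent over $R/\pp$. Hence the subspace of $\pi^{-1}Q/Q$ consisting of classes representable by an integral element has dimension at least $3$, contradicting the hypothesis.

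Therefore $\cc$ is the unit ideal locally at every prime, so $\cc = R$ globally; by the discussion at the end of Section \ref{sec:ded res}, $M_0$ is the unique resolvent of $Q$ and is numerical. The main obstacle is essentially bookkeeping---one must verify that the specific overring $Q'$ produced in each branch of the case analysis lies inside $\pi^{-1}Q$ and contributes three independent integral classes modulo $Q$---but this is transparent from the explicit $R$-bases of $Q$ and $Q'$ recorded above, so no genuinely new computation is needed beyond what was done for Theorem \ref{thm:maximal}.
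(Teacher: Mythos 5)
Your proposal is correct and matches the paper's own argument: the paper derives Theorem \ref{thm:strong maximal} from the single observation that, in the proof of Theorem \ref{thm:maximal}, a non-unique resolvent at $\pp$ produces an overring $Q'$ with $(R/\pp)^3 \subseteq Q'/Q$, generated by the integral classes $\pi^{-1}\epsilon_1, \pi^{-1}\epsilon_2, \pi^{-1}\epsilon_3$. You have simply written out the bookkeeping that the paper leaves implicit, and it checks out in both branches of the case analysis.
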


\section{The sextic ring}\label{sec:sextic_ring}
Given any resolvent $(L,M,\phi,\theta)$, the rank-$6$ lattice $S = M \oplus R$ also picks up a canonical ring structure, whose structure coefficients $d_{ij}^k$ are integer polynomials in the coefficients of $\phi$ of degree $12$ (for $k\neq 0$) and $24$ (for $k = 0$). As the construction given by Bhargava in \cite{B4}, Section 6 works without change over a Dedekind domain, we will not spell out the details. We have the equation
\begin{equation} \label{eq:disc}
  \Disc S = (16 \Disc Q)^3
\end{equation}
from (33) of \cite{B4}. (These discriminants are to be interpreted as specifying both the Steinitz class and the discriminant ideal; see \cite{ORings} for details.)

It is natural to ask whether the sextic resolvent ring is always an order in the sextic resolvent $K$-algebra, generated by classical methods from the theory of solving equations. For instance, if $Q$ is an order in $K^5$, is $S$ an order in $K^6$? We leave out the case where $\cha K = 2$, where \eqref{eq:disc} shows that $S$ is always degenerate.

\begin{thm}
Assume that $\cha K \neq 2$. Consider the familiar bijection between $n$-ic rings that are \'etale (that is, have nonzero discriminant) and maps $\Gal(\bar K/K) \to S_n$ to the symmetric group (see for instance Milne \cite[Theorem 7.29]{MilneFields}). Let $Q$ be a quintic ring and $S$ its sextic resolvent. Then the maps $\psi_Q$, $\psi_S$ associated to the \'etale $K$-algebras $Q \tensor_R K$ and $S \tensor_R K$ are related by the commutative diagram
\begin{equation}
  \xymatrix{
    \Gal(\bar K/K) \ar[d]_{\psi_Q} \ar[dr]^{\psi_S} \\
    S_5 \ar[r]^{\iota_{5,6}} & S_6
  }
\end{equation}
where $\iota_{5,6} : S_5 \to S_6$ is the exceptional embedding (given by composing the obvious injection with the famous outer automorphism of $S_6$).
\end{thm}
\begin{proof}
We may assume $R = K$ is a field. The proof consists of the following steps:
\begin{itemize}
  \item Check that the resolvent of $\ZZ^5$ is an order of index $64$ in $\ZZ^6$ (Example \ref{ex:first}).
  \item Deduce that the resolvent of $\bar K^5$ is $\bar K^6$.
  \item Analyze how the $S_n$-actions on $\bar K^n$ interact with the resolvent. We find that for each $\sigma \in S_5$,
  \[
    \sigma : \bar K^5 \to \bar K^5, \quad \iota_{5,6}\sigma : \bar K^6 \to \bar K^6
  \]
  act by permutation.
  \item By the standard description of the Galois parametrization (see Milne \cite[p.~107]{MilneFields}),
  \[
    Q = \{x \in \bar K^5 : gx = \psi_Q(g) x \quad \forall g \in \Gal(\bar K / K)\},
  \]
  where $g$ acts componentwise on $\bar K^5$, but $\psi_Q(g)$ acts by permutation.
  Consider the sextic algebra associated to $\iota_{5,6} \circ \psi_Q$:
  \[
    S' = \{x \in \bar K^6 : gx = \iota_{5,6}\big(\psi_Q(g)\big) x \quad \forall g \in \Gal(\bar K / K)\}.
  \]
  By the preceding considerations, $\phi$ and $\theta$ restrict to maps making $S'$ a resolvent for $Q$. Because $Q$ is nondegenerate over a field, the resolvent $S' = S$ is unique, so $\psi_S = \iota_{5,6} \circ \psi_Q$ as desired. \qedhere
\end{itemize}
\end{proof}

\section{Finiteness of the number of resolvents}\label{sec:bounds}

It is natural to wonder, if the resolvent of a quintic ring is not unique, how close to being unique it is. For the quartic case, a beautifully simple formula was given in \cite{B3} and extended to the Dedekind case in \cite{ORings}: the number of (numerical) resolvents is the sum of the absolute norms of the \emph{content}. In the quintic case, things do not appear to be so simple. However, we have finiteness and an upper bound that is theoretically effective. 
\begin{thm}
Let $Q$ be a not very degenerate quintic ring. Then $Q$ has at most
\[
  \prod_{\substack{\pp\text{ prime,}\\ \pp \mid \cc}} \left( \frac{N(\pp)^5 - 1}{N(\pp) - 1} \right)^{v_\pp(\cc)}
\]
resolvents, provided that the absolute norms $N(\pp) = |R/\pp|$ are finite. In particular, a not very degenerate quintic ring over the ring of integers of a number field has finitely many resolvents.
\end{thm}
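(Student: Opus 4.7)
The plan is to reduce to a local counting problem and then apply a chain-refinement argument. By the discussion of Section~\ref{sec:ded res}, every resolvent $M$ of $Q$ embeds as a lattice in $M_0$, and $M$ is numerical exactly when $\Lambda^5 M = \theta^{-1}((\Lambda^4 L)^{\tensor 3})$. Localizing at a prime $\pp$ with $\pp^n \parallel \cc$, this forces
\[
  [M_{0,\pp} : M_\pp] = [\Lambda^5 M_{0,\pp} : \Lambda^5 M_\pp] = \pp^n,
\]
the first equality being a standard elementary-divisor fact about full-rank sublattices of a free rank-$5$ module over a DVR. Since $Q$ is not very degenerate, Lemma~\ref{lem:very deg} applied to $Q_K$ furnishes $a_i \in L$ with $F(a_1,\ldots,a_5) \neq 0$, so $\cc$ is a nonzero ideal; hence only the finitely many primes dividing $\cc$ contribute, and the global count factors as a product of local ones.

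At each such $\pp$, I would bound the number of numerical $M_\pp$ by the total number of sublattices of $M_{0,\pp} \cong R_\pp^5$ of index $\pp^n$---a clean overcount that discards the auxiliary constraint \eqref{eq:phicond}, which can only shrink the count further. To enumerate these sublattices I would refine each one into a maximal chain
\[
  M_{0,\pp} = L_0 \supsetneq L_1 \supsetneq \cdots \supsetneq L_n = M_\pp, \quad [L_{i-1}:L_i] = \pp.
\]
Such a chain exists by writing $M_\pp$ in a basis of $M_{0,\pp}$ adapted to its elementary divisors and decrementing one exponent at a time. At each step, the maximal sublattices of $L_{i-1}$ correspond bijectively to hyperplanes in the five-dimensional $(R/\pp)$-vector space $L_{i-1}/\pp L_{i-1}$, of which there are exactly $(N(\pp)^5 - 1)/(N(\pp) - 1)$.

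Consequently the number of chains, and a fortiori the number of sublattices they realize, is at most $\bigl((N(\pp)^5 - 1)/(N(\pp) - 1)\bigr)^n$. Multiplying over the primes $\pp \mid \cc$ yields the stated global bound, and for $R$ the ring of integers of a number field finiteness is immediate from finiteness of the residue fields. The only somewhat delicate step is the initial identification of the local numerical condition with the plain index condition $[M_{0,\pp}:M_\pp] = \pp^n$; once this is in hand the chain-counting itself is routine, so I do not expect a serious obstacle.
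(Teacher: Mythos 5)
Your proposal is correct and follows essentially the same route as the paper: reduce to counting sublattices of index $\pp^n$ in $M_{0,\pp}$, then refine each into a chain with quotients $R/\pp$ and bound each step by the number of hyperplanes in a five-dimensional $R/\pp$-vector space. Your added justification that numerical resolvents have index exactly $\cc$ in $M_0$ (via the top exterior power) is a detail the paper states without elaboration, but the argument is the same.
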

\begin{proof}
Since all resolvents have index $\cc$ in $M_0$, it suffices to bound the number of sublattices of index $\cc$ in a fixed lattice $M_0$. By localization we may reduce to the case $\cc = \pp^n$, where $\pp$ is prime. Now a fixed lattice $M$ has $(N(\pp)^5 - 1)/(N(\pp) - 1)$ sublattices of index $\pp$, the kernels of the nonzero linear functionals $\ell : M/\pp M \to R/\pp$ mod scaling. A sublattice $M_n$ of index $\pp^n$ has a filtration $M_0 \subsetneq M_1 \subsetneq \cdots \subsetneq M_n$ where the quotients are $R/\pp$; given $M_i$, there are at most $(N(\pp)^5 - 1)/(N(\pp) - 1)$ possibilities for $M_{i+1}$, giving the claimed bound.
\end{proof}

\section{Examples}\label{sec:examples}
\begin{examp} \label{ex:first}
The most fundamental example of a sextic resolvent is as follows. Let $Q = R^{\oplus 5}$, with basis $e_1,e_2,\ldots, e_5$, and let $M = R^5$ with basis $f_1,\ldots, f_5$. Then the map
\[
  \phi(e_i) = f_i \wedge (f_{i-1} + f_{i+1})
\]
(indices mod $5$), supplemented by the orientation $\theta(\ftop) = \etop^3$ induced by these bases, is verified to be a resolvent for $Q$ (indeed the unique one, as $Q$ is maximal). The automorphism group $S_5$ of $Q$ acts on $M$ by the $5$-dimensional irreducible representation obtained (in characteristic not $2$) by restricting to the image of the exceptional embedding $\iota_{5,6}$ the standard representation of $S_6$, permuting the six vectors
\[
  f_{i-2} - f_{i-1} + f_i - f_{i+1} + f_{i+2} \quad (1 \leq i \leq 5) \textand f_1 + f_2 + f_3 + f_4 + f_5.
\]
The corresponding ring structure $S$ produced in Section \ref{sec:sextic_ring} is none other than the ring $S = S_\ZZ \tensor_\ZZ R$, where
\[
  S_\ZZ = \{(x_1;x_2;x_3;x_4;x_5;x_6) \in \ZZ^6 : x_i \equiv x_j \mod 2 \quad \forall i,j; \text{ and } \sum x_i \equiv 2x_1 \mod 4\}.
\]
\end{examp}
\begin{examp}
Consider the subring
\[
  Q = \{x_1e_1 + \cdots + x_5e_5 \in \ZZ^{\oplus 5} : x_1 \equiv x_2 \equiv x_3 \equiv x_4 \bmod p\}.
\]
Over $K$, this is a special case of the preceding example, so the $K$-resolvent $M_K$ and the associated maps $\phi$ and $\theta$ are forced. But the bounding module $M_0$ of Section \ref{sec:ded res} is no longer a resolvent, as can be seen by observing that $Q/pQ \cong \FF_p[t,\epsilon_1,\epsilon_2,\epsilon_3]/\langle\{t^2 - t, t\epsilon_i, \epsilon_i\epsilon_j\}\rangle$ is very degenerate. We have $L = \langle pe_1, pe_2, pe_3, e_5 \rangle$ and thus $\Lambda^4 L = \langle p^3 \etop \rangle$. One computes that
\[
  M_0 = \<p(f_1 + f_4),p^2 f_2, p^2 f_3, p^2 f_4, pf_5\>,
\]
and thus
\[
  \cc = [\Lambda^5 M_0 : \theta^{-1}((\Lambda^4 L)^{\tensor 3})]
  = [\langle p^8 \ftop \rangle : \langle p^9 \ftop \rangle] = p.
\]
Consequently a resolvent of $Q$ is a submodule $M$ of index $p$ in $M_0$ having the property that $\phi(\Lambda^4L \tensor L) \subseteq \Lambda^2 M$. Writing $M$ as the kernel of some linear functional $\ell : M_0/pM_0 \to \FF_p$, the condition is that $\ell$ lies in the kernel of each of the alternating bilinear forms obtained by reducing $\phi(x) \in \Lambda^2 M_0$ mod $p$ for all $x \in \Lambda^4L \tensor L)$. Let
\[
  f_1' = p(f_1 + f_4), f_2' = p^2 f_2, f_3' = p^2 f_3, f_4' = p^2 f_4, f_5' = pf_5
\]
be the basis elements of $M_0$ listed above. We compute
\begin{align*}
  \phi(p^4 \etop e_1) &= (pf_1' - f_4') \wedge (pf_5' + f_2') \\
  \phi(p^4 \etop e_2) &= f_2' \wedge (pf_1' - f_4' + f_3') \\
  \phi(p^4 \etop e_3) &= f_3' \wedge (pf_2' + f_4') \\
  \phi(p^3 \etop e_5) &= f_5' \wedge (pf_1').
\end{align*}
So, letting $\bar{f}'_i$ denote the basis vector of $M_0/pM_0$ corresponding to $f_i'$ and $\bar{f}'^*_i$ the corresponding vector of the dual basis, we have
\[
  \ell \in \ker(\bar{f}'_2 \wedge \bar{f}'_4) \intsec \ker(\bar{f}'_2 \wedge \bar{f}'_3)
  \intsec \ker(\bar{f}'_3 \wedge \bar{f}'_4) = \<\bar{f}'^*_1, \bar{f}'^*_5\>.
\]
Since $\ell$ can take any value in the last-named vector space, up to scaling, we get $p+1$ resolvents.
\end{examp}
\begin{examp}
The ring
\[
  Q = \ZZ \oplus \ZZ \oplus \ZZ[x,y]/(x,y)^2
\]
is a curious example of Theorem \ref{thm:strong maximal}. Although $Q$ is infinitely far from being maximal ($\ZZ \oplus \ZZ \oplus \ZZ[n^{-1}x,n^{-1}y]/(n^{-2}(x,y)^2)$ is a quintic extension ring for any $n > 0$), the extensions are only in two directions, as it were, and the resolvent is accordingly unique.
\end{examp}
\bibliography{../Master.bib}
\bibliographystyle{plain}

\end{document}